\newcommand{\Mod}[1]{\ (\textup{mod}\ #1)}
\theoremstyle{plain} 
\newtheorem{theorem}{\indent\sc Theorem}[section]
\newtheorem{lemma}[theorem]{\indent\sc Lemma}
\newtheorem{corollary}[theorem]{\indent\sc Corollary}
\newtheorem{proposition}[theorem]{\indent\sc Proposition}
\theoremstyle{definition} 
\newtheorem{remark}[theorem]{\indent\sc Remark}
\def\address#1#2{\begingroup
\noindent\parbox[t]{7.8cm}{%
\small{\scshape\ignorespaces#1}\par\vskip1ex
\noindent\small{\itshape E-mail address}%
\/: #2\par\vskip4ex}\hfill%
\endgroup}%
\title{Generation of class fields by using the Weber function} 
\author{
\textsc{Ja Kyung Koo, Dong Hwa Shin and Dong Sung Yoon} 
}
\date{} 
\begin{document}

\allowdisplaybreaks

\maketitle

\footnote{ 
2010 \textit{Mathematics Subject Classification}. Primary 11R37; Secondary 11G15, 11G16.}
\footnote{ 
\textit{Key words and phrases}. Class field theory, complex multiplication, Weber function.} \footnote{
\thanks{
The second named author was supported by Hankuk University of Foreign Studies Research Fund of
2014. The third named author was supported by the National Institute for Mathematical Sciences, Republic of Korea.} }

\begin{abstract}
Let $K$ be an imaginary quadratic field
and $\mathcal{O}_K$ be its ring of integers.
Let $h_E$ be the Weber function on certain elliptic curve $E$ with complex multiplication by $\mathcal{O}_K$.
We show that if $N$ ($>1$) is an integer prime to $6$, then the function $h_E$ alone
generates the ray class field modulo $N\mathcal{O}_K$ over $K$
when evaluated at some $N$-torsion point of $E$,
which would be a partial answer to the question mentioned in \cite[p.134]{Silverman}.
\end{abstract}

\maketitle

\section {Introduction}

For a lattice $\Lambda$ in $\mathbb{C}$, let
\begin{equation*}
g_2(\Lambda)=60\sum_{\lambda\in\Lambda\setminus\{0\}}\frac{1}{\lambda^4},\quad
g_3(\Lambda)=140\sum_{\lambda\in\Lambda\setminus\{0\}}\frac{1}{\lambda^6},\quad
\Delta(\Lambda)=g_2(\Lambda)^3-27g_3(\Lambda)^2
\end{equation*}
and
\begin{equation*}
j(\Lambda)=1728\frac{g_2(\Lambda)^3}{\Delta(\Lambda)}.
\end{equation*}
Note that the series $g_2(\Lambda)$ and $g_3(\Lambda)$ converge absolutely, and $\Delta(\Lambda)\neq0$
(\cite[Lemma 10.2 and Proposition 10.7]{Cox}).
We further define the \textit{Weierstrass $\wp$-function} relative to $\Lambda$ by
\begin{equation*}
\wp(z;\Lambda)=\frac{1}{z^2}+\sum_{\lambda\in\Lambda\setminus\{0\}}
\left(\frac{1}{(z-\lambda)^2}-\frac{1}{\lambda^2}\right)\quad(z\in\mathbb{C})
\end{equation*}
with derivative $\wp'(z;\Lambda)=d\wp(z;\Lambda)/dz$.
This is an even function which is periodic with respect to $\Lambda$.
\par
Let $K$ be an imaginary quadratic field and $\mathcal{O}_K$ be its
ring of integers.
Let $E$ be the elliptic curve over $\mathbb{C}$ with
parametrization
\begin{equation}\label{parametrization}
\begin{array}{ccl}
\varphi_E~:~\mathbb{C}/\mathcal{O}_K&\stackrel{\sim}{\rightarrow}
&E(\mathbb{C})~:~y^2=4x^3-g_2(\mathcal{O}_K)x-g_3(\mathcal{O}_K)\\
\qquad z&\mapsto&(\wp(z;\mathcal{O}_K),\wp'(z;\mathcal{O}_K))
\end{array}
\end{equation}
so that it has complex multiplication by $\mathcal{O}_K$ with $j$-invariant $j(E)=j(\mathcal{O}_K)$
(see \cite[$\S$14]{Cox} or \cite[Chapter 6]{Silverman2}).
Then the map $h_E:E\rightarrow\mathbb{P}^1$ defined by
\begin{equation}\label{Weber}
h_E(\varphi_E(z))=\left\{\begin{array}{ll}
\displaystyle\frac{g_2(\mathcal{O}_K)g_3(\mathcal{O}_K)}{\Delta(\mathcal{O}_K)}
\wp(z;\mathcal{O}_K) & \textrm{if}~j(E)\neq0,1728,\phantom{\Bigg|}\\
\displaystyle\frac{g_2(\mathcal{O}_K)^2}{\Delta(\mathcal{O}_K)}
\wp(z;\mathcal{O}_K)^2 & \textrm{if}~j(E)=1728,\\
\displaystyle\frac{g_3(\mathcal{O}_K)}{\Delta(\mathcal{O}_K)}
\wp(z;\mathcal{O}_K)^3 & \textrm{if}~j(E)=0\phantom{\Bigg|}
\end{array}\right.
\end{equation}
gives rise to an isomorphism from $E/\mathrm{Aut}(E)$ onto $\mathbb{P}^1$
(\cite[Chapter II, Example 5.5.2]{Silverman}), which is called the
\textit{Weber function} on $E$.
\par
Let $\mathfrak{f}$ be a proper nontrivial ideal of $\mathcal{O}_K$ and
$K_\mathfrak{f}$ be the ray class field of $K$ modulo $\mathfrak{f}$ (see $\S$\ref{Sectclass}).
Hasse (\cite{Hasse}) showed by utilizing the theory of complex multiplication that
\begin{equation}\label{Hassegenerators}
K_\mathfrak{f}=K(j(E),h_E(E[\,\mathfrak{f}\,]))
\end{equation}
where $E[\,\mathfrak{f}\,]$ is the group of $\mathfrak{f}$-torsion points of $E$.
However,
it is still an open question
whether $h_E(P)$ alone
generates $K_\mathfrak{f}$ over $K$
for some $P\in E[\,\mathfrak{f}\,]$ as far as we understand.
\par
In this paper we shall give a partial answer to this question.
More precisely,
let $\mathfrak{f}=N\mathcal{O}_K$ for an integer $N>1$ prime to $6$,
and let $K_\mathcal{O}$ be
the ring class field of the order $\mathcal{O}$ of conductor $N$ in $K$ (see $\S$\ref{Sectclass}).
We shall show that
if $K$ is different from $\mathbb{Q}(\sqrt{-1})$ and $\mathbb{Q}(\sqrt{-3})$, then
the relative norm
\begin{equation*}
\mathrm{N}_{K_\mathfrak{f}/K_\mathcal{O}}(h_E(\varphi_E(2/N))-h_E(\varphi_E(1/N)))
=\prod_{t\in(\mathbb{Z}/N\mathbb{Z})^\times/\{\pm1\}}
(h_E(\varphi_E(2t/N))-h_E(\varphi_E(t/N)))
\end{equation*}
generates $K_\mathcal{O}$ over $K$ (Theorem \ref{relativenorm} and Remark \ref{explicit}).
It then follows that $h_E(\varphi_E(1/N))$ indeed generates
$K_\mathfrak{f}$ over $K$ (Corollary \ref{main}). For this
we shall introduce Fricke invariants and
use the second Kronecker's limit formula concerning Siegel-Ramachandra invariants.

\section {Fricke and Siegel functions}

Let $\mathbb{H}=\{\tau\in\mathbb{C}~|~\mathrm{Im}(\tau)>0\}$ be the complex
upper half-plane.
We define the \textit{elliptic modular function} $j(\tau)$ on $\mathbb{H}$ by
\begin{equation}\label{j}
j(\tau)=j([\tau,1])=1728\frac{g_2([\tau,1])^3}{\Delta([\tau,1])}.
\end{equation}
Furthermore, for $k\in\{1,2,3\}$ and $\left[\begin{smallmatrix}r_1\\r_2\end{smallmatrix}\right]\in\mathbb{Q}^2\setminus\mathbb
{Z}^2$ we define the $k$-th
\textit{Fricke function} $f^{(k)}_{\left[\begin{smallmatrix}r_1\\r_2\end{smallmatrix}\right]}(\tau)$ on $\mathbb{H}$ by
\begin{equation}\label{Fricke}
f^{(k)}_{\left[\begin{smallmatrix}r_1\\r_2\end{smallmatrix}\right]}(\tau)=
\left\{\begin{array}{ll}
\displaystyle\frac{g_2([\tau,1])g_3([\tau,1])}{\Delta([\tau,1])}\wp(r_1\tau+r_2;[\tau,1])
& \textrm{if}~k=1,\phantom{\Bigg|}\\
\displaystyle\frac{g_2([\tau,1])^2}{\Delta([\tau,1])}\wp(r_1\tau+r_2;[\tau,1])^2
& \textrm{if}~k=2,\phantom{\Bigg|}\\
\displaystyle\frac{g_3([\tau,1])}{\Delta([\tau,1])}\wp(r_1\tau+r_2;[\tau,1])^3
& \textrm{if}~k=3\phantom{\Bigg|}
\end{array}\right.
\end{equation}
which is holomorphic on $\mathbb{H}$ and depends only on $\pm\left[\begin{smallmatrix}r_1\\r_2\end{smallmatrix}\right]\Mod{\mathbb{Z}^2}$
(\cite[$\S$6.1.A]{Shimura}).
Here, for the sake of convenience, we often write $f_{\left[\begin{smallmatrix}r_1\\r_2\end{smallmatrix}\right]}(\tau)$
for $f^{(1)}_{\left[\begin{smallmatrix}r_1\\r_2\end{smallmatrix}\right]}(\tau)$.
\par
Let
\begin{equation*}
\mathcal{F}_N=\left\{\begin{array}{ll}\mathbb{Q}(j(\tau)) & \textrm{if}~N=1,\\
\mathbb{Q}(j(\tau),f_{\left[\begin{smallmatrix}r_1\\r_2\end{smallmatrix}\right]}
(\tau)~|~\left[\begin{smallmatrix}r_1\\r_2\end{smallmatrix}\right]\in(1/N)\mathbb{Z}^2
\setminus\mathbb{Z}^2) & \textrm{if}~N\geq2.
\end{array}\right.
\end{equation*}
Then it is well-known that
$\mathcal{F}_N$ is a Galois extension of $\mathcal{F}_1$ whose
Galois group is isomorphic to $\mathrm{GL}_2(\mathbb{Z}/N\mathbb{Z})/\{\pm I_2\}$
(\cite[Theorem 6.6 (1) and (2)]{Shimura}).
In fact,
$\mathcal{F}_N$ coincides with the field of all meromorphic modular functions
of level $N$ whose Fourier expansions with respect to $q^{1/N}$ ($q=e^{2\pi i\tau}$) have coefficients in
the $N$-th cyclotomic field  (\cite[Proposition 6.9 (1)]{Shimura}).
\par
Let  $g_{\left[\begin{smallmatrix}r_1\\r_2\end{smallmatrix}\right]}(\tau)$
be the
\textit{Siegel function} on $\mathbb{H}$ given by the following infinite product
\begin{equation*}
g_{\left[\begin{smallmatrix}r_1\\r_2\end{smallmatrix}\right]}(\tau)
=-e^{\pi i r_2(r_1-1)}q^{(1/2)\mathbf{B}_2(r_1)}
(1-q^{r_1}e^{2\pi ir_2})\prod_{n=1}^\infty (1-q^{n+r_1}e^{2\pi ir_2})
(1-q^{n-r_1}e^{-2\pi ir_2}),
\end{equation*}
where $\mathbf{B}_2(X)=X^2-X+1/6$
is the second Bernoulli polynomial.
Observe that it has neither zeros nor poles on $\mathbb{H}$, and
its $12N$-th power belongs to $\mathcal{F}_N$ (\cite[Chapter 2, Theorem 1.2]{K-L}).

\begin{lemma}\label{FrickeSiegel}
Let $\mathbf{u},\mathbf{v}\in\mathbb{Q}^2\setminus\mathbb{Z}^2$ such that
$\mathbf{u}\not\equiv\pm\mathbf{v}\Mod{\mathbb{Z}^2}$. Then
we have the relation
\begin{equation*}
(f_\mathbf{u}(\tau)-
f_\mathbf{v}(\tau))^6=\frac{j(\tau)^2(j(\tau)-1728)^3}{2^{30}3^{24}}
\frac{g_{\mathbf{u}+\mathbf{v}}(\tau)^6g_{\mathbf{u}-\mathbf{v}}(\tau)^6}
{g_\mathbf{u}(\tau)^{12}g_\mathbf{v}(\tau)^{12}}.
\end{equation*}
\end{lemma}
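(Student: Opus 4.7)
The plan is to convert both sides of the claimed identity into equivalent expressions involving the Weierstrass sigma function, evaluated at $u = u_1\tau + u_2$ and $v = v_1\tau + v_2$ with $\mathbf{u} = \left[\begin{smallmatrix}u_1\\u_2\end{smallmatrix}\right]$ and $\mathbf{v} = \left[\begin{smallmatrix}v_1\\v_2\end{smallmatrix}\right]$, and then rewrite these via Klein forms in terms of Siegel functions. First, from the definition (\ref{Fricke}) of $f^{(1)}$,
$$f_\mathbf{u}(\tau) - f_\mathbf{v}(\tau) = \frac{g_2([\tau,1])\,g_3([\tau,1])}{\Delta([\tau,1])}\bigl(\wp(u;[\tau,1]) - \wp(v;[\tau,1])\bigr).$$
Using $j = 1728\,g_2^3/\Delta$ and $j-1728 = 1728\cdot 27\,g_3^2/\Delta$, the arithmetic identity $1728^5\cdot 27^3 = 2^{30}3^{24}$ gives $g_2^6 g_3^6/\Delta^6 = j^2(j-1728)^3/(2^{30}3^{24}\,\Delta)$. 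Consequently
$$(f_\mathbf{u}(\tau) - f_\mathbf{v}(\tau))^6 = \frac{j(\tau)^2(j(\tau)-1728)^3}{2^{30}3^{24}\,\Delta(\tau)}\bigl(\wp(u) - \wp(v)\bigr)^6,$$
so it suffices to establish the auxiliary identity
$$\bigl(\wp(u) - \wp(v)\bigr)^6 = \Delta(\tau)\cdot\frac{g_{\mathbf{u}+\mathbf{v}}(\tau)^6\,g_{\mathbf{u}-\mathbf{v}}(\tau)^6}{g_\mathbf{u}(\tau)^{12}\,g_\mathbf{v}(\tau)^{12}}.$$

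For the auxiliary identity I would invoke the classical factorization
$$\wp(u;L) - \wp(v;L) = -\frac{\sigma(u+v;L)\,\sigma(u-v;L)}{\sigma(u;L)^2\,\sigma(v;L)^2}$$
with $L = [\tau,1]$, and convert sigma values into Siegel functions through the Klein form $\mathfrak{k}(z;L) = e^{-\eta(z;L)z/2}\sigma(z;L)$. Since the quasi-period $\eta(z;L)$ is $\mathbb{Z}$-linear in $z$, the exponential factors in the ratio $\mathfrak{k}(u+v)\mathfrak{k}(u-v)/(\mathfrak{k}(u)^2\mathfrak{k}(v)^2)$ cancel identically, so the classical identity holds verbatim with $\sigma$ replaced by $\mathfrak{k}$. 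Then the Kubert--Lang proportionality $g_\mathbf{a}(\tau) = c\,\mathfrak{k}_\mathbf{a}(\tau)\eta(\tau)^2$ (with $|c| = 2\pi$, obtained by matching $q$-expansions near $\mathbf{a}\to 0$) and the standard formula $\Delta(\tau) = (2\pi)^{12}\eta(\tau)^{24}$ combine to give exactly the auxiliary identity after raising to the sixth power: twelfth powers kill all roots of unity, and the product $c^{12}\eta^{24}$ collapses to $\Delta$.

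The main obstacle is keeping track of the constants: the precise proportionality factor $c$ between $g_\mathbf{a}$ and $\mathfrak{k}_\mathbf{a}\eta^2$ is what arranges for $c^{12}$ to match $\Delta/\eta^{24}$, so a miscounted power of $2\pi$ or sign error would break the identity at the level of the prefactor $2^{30}3^{24}$. Pinning this constant down is cleanest via the leading coefficient of the $q$-expansion of $g_\mathbf{a}$ near $\mathbf{a} = \left[\begin{smallmatrix}0\\0\end{smallmatrix}\right]$ and the expansion $\sigma(z) = z + O(z^5)$; everything else in the argument is formal manipulation of the Weierstrass identity and the definitions in (\ref{j}) and (\ref{Fricke}).
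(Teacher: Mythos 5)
Your proposal is correct and follows essentially the same route as the paper, whose proof is simply a citation of Lang's $\sigma$-function factorization of $\wp(u)-\wp(v)$ together with the Kubert--Lang relations between Klein forms, Siegel functions and $\Delta=(2\pi)^{12}\eta^{24}$; your constant-tracking ($1728^5\cdot 27^3=2^{30}3^{24}$ and $c^{12}\eta^{24}=\Delta$) checks out. The only nitpick is that the quasi-period $\eta(z;L)$ is extended $\mathbb{R}$-linearly (not $\mathbb{Z}$-linearly), but additivity is all the cancellation needs.
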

\begin{proof}
 It follows from \cite[Chapter 18, Theorem 2]{Lang}, \cite[p.29 and p.51]{K-L} and definitions (\ref{j}) and (\ref{Fricke}).
\end{proof}

\section {Class fields over imaginary quadratic fields}\label{Sectclass}

Let $K$ be an imaginary quadratic field of discriminant $d_K$ and $\mathcal{O}_K$ be its
ring of integers. If we set
\begin{equation*}
\tau_K=\frac{d_K+\sqrt{d_K}}{2},
\end{equation*}
then we see that $\tau_K\in\mathbb{H}$ and $\mathcal{O}_K=[\tau_K,1]$
(\cite[p.103]{Cox}).
\par
Let $\mathfrak{f}$ be a nontrivial ideal of $\mathcal{O}_K$. We denote by
\begin{eqnarray*}
I_K(\mathfrak{f})&=&\textrm{the group of all fractional ideals of $K$ prime to}~\mathfrak{f},\\
P_K(\mathfrak{f})&=&\langle\alpha\mathcal{O}_K~|~\alpha\in\mathcal{O}_K\setminus\{0\}~
\textrm{such that $\alpha\mathcal{O}_K$ is prime to}~\mathfrak{f}\rangle,\\
P_{K,1}(\mathfrak{f})&=&\langle\alpha\mathcal{O}_K~|~\alpha\in\mathcal{O}_K\setminus\{0\}~
\textrm{such that}~\alpha\mathcal{O}_K~\textrm{is prime to $\mathfrak{f}$ and}~\alpha\equiv1\Mod{\mathfrak{f}}\rangle,
\end{eqnarray*}
and call the quotient group
\begin{equation*}
\mathrm{Cl}(\mathfrak{f})=I_K(\mathfrak{f})/P_{K,1}(\mathfrak{f})
\end{equation*}
the \textit{ray class group} of $K$ modulo $\mathfrak{f}$.
By the existence theorem of class field theory there is a unique finite abelian extension
$K_\mathfrak{f}$ of $K$, called the \textit{ray class field} of $K$ modulo $\mathfrak{f}$, such that
\begin{equation*}
\mathrm{Gal}(K_\mathfrak{f}/K)\simeq\mathrm{Cl}(\mathfrak{f})
\end{equation*}
via the Artin reciprocity map for $\mathfrak{f}$ (\cite[Chapter V, Theorem 9.1]{Janusz}).
In particular, we call $K_{\mathcal{O}_K}$ the \textit{Hilbert class field} of $K$
and denote it also by $H_K$.

\begin{proposition}\label{CM}
Let $E$ be the elliptic curve given in \textup{(\ref{parametrization})} and $h_E$ be
the Weber function on $E$ defined as in \textup{(\ref{Weber})}. Then,
\begin{itemize}
\item[\textup{(i)}] $H_K$ is generated by $j(E)=j(\tau_K)$ over $K$.
\item[\textup{(ii)}] If $z_0\in\mathbb{C}$ is a generator
of $\mathfrak{f}^{-1}/\mathcal{O}_K$ as an $\mathcal{O}_K$-module, then
$h_E(\varphi_E(z_0))$ generates $K_\mathfrak{f}$ over $H_K$.
\end{itemize}
\end{proposition}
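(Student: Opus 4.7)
My plan is to deduce both statements from the main theorems of complex multiplication, using the parametrization of CM elliptic curves by ideal classes for (i) and Shimura's reciprocity law for the Galois action on torsion points for (ii).

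For (i), I would appeal to the classification of isomorphism classes of elliptic curves over $\mathbb{C}$ with CM by $\mathcal{O}_K$: these are in bijection with $\mathrm{Cl}(\mathcal{O}_K)$ via $[\mathfrak{a}]\mapsto\mathbb{C}/\mathfrak{a}$. The first main theorem of CM then identifies, through the Artin reciprocity map, the class-group action with the Galois action of $\mathrm{Gal}(H_K/K)$ on the set of $j$-values $\{j(\mathfrak{a})\}_{[\mathfrak{a}]}$. Since non-isomorphic elliptic curves have distinct $j$-invariants, $j(\tau_K)$ has exactly $|\mathrm{Cl}(\mathcal{O}_K)|=[H_K:K]$ conjugates over $K$, so $K(j(\tau_K))=H_K$.

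For (ii), I would invoke the second main theorem of CM. Under the natural isomorphism
\[
\mathrm{Gal}(K_\mathfrak{f}/H_K)\cong (\mathcal{O}_K/\mathfrak{f})^\times\big/\mathrm{image}(\mathcal{O}_K^\times),
\]
a class $s\in(\mathcal{O}_K/\mathfrak{f})^\times$ acts on the $\mathfrak{f}$-torsion point $\varphi_E(z)$, for $z\in\mathfrak{f}^{-1}/\mathcal{O}_K$, by $\varphi_E(z)\mapsto\varphi_E(s^{-1}z)$. Because $h_E$ factors through $E/\mathrm{Aut}(E)$, the element $s$ fixes $h_E(\varphi_E(z_0))$ if and only if $s^{-1}z_0\equiv u z_0\pmod{\mathcal{O}_K}$ for some $u\in\mathrm{Aut}(E)=\mathcal{O}_K^\times$. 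Since $z_0$ generates $\mathfrak{f}^{-1}/\mathcal{O}_K$ as an $\mathcal{O}_K$-module, this forces $(s^{-1}-u)\mathfrak{f}^{-1}\subseteq\mathcal{O}_K$, i.e., $s\equiv u^{-1}\pmod{\mathfrak{f}}$. Hence the stabilizer is trivial in the quotient by $\mathrm{image}(\mathcal{O}_K^\times)$, and we conclude $H_K(h_E(\varphi_E(z_0)))=K_\mathfrak{f}$.

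The main delicacy is pinning down the reciprocity normalization (which decides whether $s$ or $s^{-1}$ appears in the torsion action) and matching it with the identification of $\mathrm{Gal}(K_\mathfrak{f}/H_K)$ via the Artin map; once conventions are fixed the stabilizer computation reduces to a one-line $\mathcal{O}_K$-module argument. The uniform definition of $h_E$ in (\ref{Weber}), which factors out the full automorphism group rather than only $\{\pm 1\}$, lets the exceptional fields $\mathbb{Q}(\sqrt{-1})$ and $\mathbb{Q}(\sqrt{-3})$ be handled without any separate consideration.
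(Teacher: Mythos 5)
Your proposal is correct and follows the same route as the paper, which simply cites Lang's \emph{Elliptic Functions} (Chapter 10, Theorem 1 and the Corollary to Theorem 7) for both parts; your sketch is precisely the standard argument underlying those citations, namely the first main theorem of CM plus the transitivity of the class-group action on $j$-invariants for (i), and the second main theorem together with the triviality of the stabilizer of $h_E(\varphi_E(z_0))$ in $(\mathcal{O}_K/\mathfrak{f})^\times/\mathrm{image}(\mathcal{O}_K^\times)$ for (ii). The stabilizer computation using that $z_0$ generates $\mathfrak{f}^{-1}/\mathcal{O}_K$ is exactly the right one-line module argument, and your remark that the uniform definition of $h_E$ absorbs the cases $K=\mathbb{Q}(\sqrt{-1}),\mathbb{Q}(\sqrt{-3})$ is accurate.
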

\begin{proof}
(i) See \cite[Chapter 10, Theorem 1]{Lang}.\\
(ii) See \cite[Chapter 10, Corollary to Theorem 7]{Lang}.
\end{proof}

\begin{remark}\label{WeberFricke}
\begin{itemize}
\item[(i)] Note that $j(\tau)$ gives rise to a bijection
$\mathrm{SL}_2(\mathbb{Z})\backslash\mathbb{H}
\rightarrow\mathbb{C}$, and
\begin{equation*}
j(\tau_K)=\left\{
\begin{array}{ll}
1728 & \textrm{if}~K=\mathbb{Q}(\sqrt{-1}),\\
0 & \textrm{if}~K=\mathbb{Q}(\sqrt{-3})
\end{array}\right.
\end{equation*}
(\cite[Theorem 11.2 and p.261]{Cox}).
\item[(ii)] If $\left[\begin{smallmatrix}r_1\\r_2\end{smallmatrix}\right]\in\mathbb{Q}^2\setminus\mathbb{Z}^2$, then one can readily attain
\begin{equation*}
h_E(\varphi_E(r_1\tau_K+r_2))=f^{(k)}_{\left[\begin{smallmatrix}r_1\\r_2\end{smallmatrix}\right]}(\tau_K)
\end{equation*}
where
\begin{equation*}
k=\left\{
\begin{array}{ll}
1 & \textrm{if}~K\neq\mathbb{Q}(\sqrt{-1}),\mathbb{Q}(\sqrt{-3}),\\
2 & \textrm{if}~K=\mathbb{Q}(\sqrt{-1}),\\
3 & \textrm{if}~K=\mathbb{Q}(\sqrt{-3}).
\end{array}\right.
\end{equation*}
\end{itemize}
\end{remark}

Now, let $\mathfrak{f}=N\mathcal{O}_K$
for a positive integer $N$ and $P_{K,\mathbb{Z}}(\mathfrak{f})$ be the subgroup of $I_K(\mathfrak{f})$
given by
\begin{align*}
P_{K,\mathbb{Z}}(\mathfrak{f})=\langle\alpha\mathcal{O}_K~|~
\alpha\in\mathcal{O}_K\setminus\{0\}~\textrm{such that $\alpha\mathcal{O}_K$ is prime to $\mathfrak{f}$ and}~
\alpha\equiv a\Mod{\mathfrak{f}}\\\textrm{for some integer $a$ with}~
\gcd(N,a)=1\rangle.
\end{align*}
Let $\mathcal{O}=[N\tau_K,1]$ be the order of conductor $N$ in $K$,
$I(\mathcal{O})$ be the group of all proper fractional $\mathcal{O}$-ideals and
$P(\mathcal{O})$ be the subgroup of $I(\mathcal{O})$ consisting of principal $\mathcal{O}$-ideals.
Since
$I(\mathcal{O})/P(\mathcal{O})\simeq I_K(\mathfrak{f})/P_{K,\mathbb{Z}}(\mathfrak{f})$
(\cite[Proposition 7.22]{Cox}), there is a unique abelian extension $K_\mathcal{O}$ of $K$,
called the \textit{ring class field} of the order $\mathcal{O}$,
 such that
\begin{equation*}
\mathrm{Gal}(K_\mathcal{O}/K)\simeq I_K(\mathfrak{f})/P_{K,\mathbb{Z}}(\mathfrak{f})
\simeq I(\mathcal{O})/P(\mathcal{O}).
\end{equation*}
As a consequence of the main theorem of complex multiplication we have
\begin{equation*}
K_\mathcal{O}=K(j(\mathcal{O}))=K(j(N\tau_K))
\end{equation*}
(\cite[Chapter 10, Theorem 5]{Lang}).
\par
Since
\begin{equation*}
\mathrm{Gal}(H_K/K)\simeq\mathrm{Cl}(\mathcal{O}_K)=
I_K(\mathcal{O}_K)/P_K(\mathcal{O}_K)\simeq
I_K(\mathfrak{f})/P_K(\mathfrak{f}),
\end{equation*}
we obtain
\begin{equation}\label{Galois}
\mathrm{Gal}(K_\mathfrak{f}/K_\mathcal{O})\simeq P_{K,\mathbb{Z}}(\mathfrak{f})/P_{K,1}(\mathfrak{f}).
\end{equation}
Furthermore, the inclusions $P_{K,1}(\mathfrak{f})\subseteq P_{K,\mathbb{Z}}(\mathfrak{f})
\subseteq P_K(\mathfrak{f})$ imply
\begin{equation}\label{classinclusion}
H_K\subseteq K_\mathcal{O}\subseteq K_\mathfrak{f}.
\end{equation}

\section {Fricke invariants}

For an integer $N>1$, let
\begin{equation*}
V_N=\{\mathbf{v}\in\mathbb{Q}^2~|~\mathbf{v}~\textrm{has primitive denominator}~N\}.
\end{equation*}
We call a family $\{h_\mathbf{v}(\tau)\}_{\mathbf{v}\in V_N}$ of
functions in $\mathcal{F}_N$ a \textit{Fricke family} of level $N$ if for
every $\mathbf{v}\in V_N$
\begin{itemize}
\item $h_\mathbf{v}(\tau)$ is holomorphic on $\mathbb{H}$,
\item $h_\mathbf{v}(\tau)$ depends only on $\pm\mathbf{v}\Mod{\mathbb{Z}^2}$,
\item $h_\mathbf{v}(\tau)^\gamma
=h_{\gamma^T\mathbf{v}}(\tau)$ for all $\gamma\in\mathrm{Gal}(\mathbb{Z}/N\mathbb{Z})/\{\pm I_2\}\simeq\mathrm{Gal}(\mathcal{F}_N/\mathcal{F}_1)$,
where $\gamma^T$ stands for the transpose of $\gamma$.
\end{itemize}
In particular, $\{f_\mathbf{v}(\tau)\}_{\mathbf{v}\in V_N}$ and $\{g_{\mathbf{v}}(\tau)^{12N}\}_{\mathbf{v}\in V_N}$
are both Fricke families of level $N$
(\cite[Theorem 6.6 (2)]{Shimura} and \cite[Chapter 2, Proposition 1.3]{K-L}).
\par
Let $K$ be an imaginary quadratic field and
$\mathfrak{f}$ be a proper nontrivial ideal of $\mathcal{O}_K$.
Let $N$ ($>1$) be the
smallest positive integer in $\mathfrak{f}$, and let $C\in\mathrm{Cl}(\mathfrak{f})$.
Take an ideal $\mathfrak{c}$ of $\mathcal{O}_K$ in $C$ and set
\begin{eqnarray*}
\mathfrak{f}\mathfrak{c}^{-1}&=&[\omega_1,\omega_2]\quad\textrm{for some}~ \omega_1,\omega_2\in\mathbb{C}~\textrm{such that}~\omega=\omega_1/\omega_2\in\mathbb{H},\\
1&=&r_1\omega_1+r_2\omega_2\quad\textrm{for some}~r_1,r_2\in(1/N)\mathbb{Z}.
\end{eqnarray*}
With the above notations let $\{h_\mathbf{v}(\tau)\}_{\mathbf{v}\in V_N}$ be a Fricke family of level $N$.
We then define the \textit{Fricke invariant} $h_\mathfrak{f}(C)$ modulo $\mathfrak{f}$ at the class $C$ by
\begin{equation}\label{Frickeinvariant}
h_\mathfrak{f}(C)=h_{\left[\begin{smallmatrix}r_1\\r_2\end{smallmatrix}\right]}(\omega).
\end{equation}
This value depends only on $\mathfrak{f}$ and $C$, not on the choices of $\mathfrak{c}$, $\omega_1$ and $\omega_2$ (\cite[Chapter 11, $\S$1]{K-L}).

\begin{proposition}\label{invariant}
The Fricke invariant $h_\mathfrak{f}(C)$ lies in $K_\mathfrak{f}$.
And, we have the transformation formula
\begin{equation*}
h_\mathfrak{f}(C)^{\sigma_\mathfrak{f}(C')}=h_\mathfrak{f}(CC')\quad\textrm{for any}~C'\in\mathrm{Cl}(\mathfrak{f}),
\end{equation*}
where $\sigma_\mathfrak{f}:\mathrm{Cl}(\mathfrak{f})\stackrel{\sim}{\rightarrow}\mathrm{Gal}(K_\mathfrak{f})$
is the Artin reciprocity map for $\mathfrak{f}$.
\end{proposition}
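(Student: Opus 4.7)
The plan is to derive both assertions from Shimura's reciprocity law (the main theorem of complex multiplication for modular functions), as formulated in \cite[Chapter 6]{Shimura} or \cite[Chapter 10]{Lang}. The key structural ingredient is the Fricke family axiom $h_\mathbf{v}^\gamma = h_{\gamma^T \mathbf{v}}$, which makes the action of Artin symbols on values $h_\mathbf{v}(\omega)$ at CM points computable via matrix actions on the index $\mathbf{v}$.

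First I would show that $h_\mathfrak{f}(C) \in K_\mathfrak{f}$. The point $\omega \in K \cap \mathbb{H}$ is a CM point with $[\omega, 1] = \omega_2^{-1}\mathfrak{f}\mathfrak{c}^{-1}$, and the relation $1 = r_1\omega_1 + r_2 \omega_2$ says that $\mathbf{v} = \left[\begin{smallmatrix}r_1\\r_2\end{smallmatrix}\right]$ represents the class of $1/\omega_2 \in \mathfrak{c}^{-1}$ inside $(1/N)[\omega,1]/[\omega,1]$. Since $h_\mathbf{v} \in \mathcal{F}_N$, Shimura's reciprocity gives $h_\mathbf{v}(\omega) \in K_{N\mathcal{O}_K}$, and to sharpen membership to $K_\mathfrak{f}$ I would verify invariance under $\sigma_{N\mathcal{O}_K}([\alpha\mathcal{O}_K])$ for $\alpha \in \mathcal{O}_K$ with $\alpha\mathcal{O}_K$ prime to $N\mathcal{O}_K$ and $\alpha \equiv 1 \Mod{\mathfrak{f}}$ (the weakest condition guaranteeing triviality in $\mathrm{Cl}(\mathfrak{f})$). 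Under Shimura's reciprocity, this Artin symbol acts via the matrix representing multiplication by $\alpha$ on the basis $(\omega_1, \omega_2)$ reduced mod $N$; the hypothesis $\alpha \equiv 1 \Mod{\mathfrak{f}}$ forces that matrix to fix $\mathbf{v}$ modulo $\pm\mathbb{Z}^2$, and the Fricke family axiom gives the required invariance.

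Next I would establish the transformation formula. Choose an integral $\mathfrak{c}' \in C'$ prime to $\mathfrak{f}$ and an idele $s$ of $K$ representing $\mathfrak{c}'$. By Shimura's reciprocity, $\sigma_\mathfrak{f}(C') = [s, K]$ sends $h_\mathbf{v}(\omega)$ to $h_{\gamma^T \mathbf{v}}(\omega)$, where $\gamma \in \mathrm{GL}_2(\mathbb{Z}/N\mathbb{Z})/\{\pm I_2\}$ encodes the action of $s^{-1}$, equivalently of multiplication by $(\mathfrak{c}')^{-1}$, on the basis $(\omega_1,\omega_2)$ of $\mathfrak{f}\mathfrak{c}^{-1}$ reduced modulo $N\cdot\mathfrak{f}\mathfrak{c}^{-1}$. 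On the other hand, applying the Fricke invariant recipe to $CC'$ with representative $\mathfrak{c}\mathfrak{c}'$ calls for a basis $(\omega_1', \omega_2')$ of $\mathfrak{f}(\mathfrak{c}\mathfrak{c}')^{-1} = (\mathfrak{c}')^{-1}\mathfrak{f}\mathfrak{c}^{-1}$ together with coordinates $\left[\begin{smallmatrix}r_1'\\r_2'\end{smallmatrix}\right]$ of $1$; a direct comparison shows that $(\omega_1', \omega_2')$ differs from $(\omega_1, \omega_2)$ by precisely the matrix $\gamma$ up to an $\mathrm{SL}_2(\mathbb{Z})$-adjustment used to normalize $\omega' \in \mathbb{H}$. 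Combining this with the Fricke transformation law (a consequence of the third axiom applied to the $\mathrm{SL}_2$-part of $\mathrm{GL}_2$) yields $h_\mathfrak{f}(CC') = h_{\gamma^T \mathbf{v}}(\omega) = h_\mathfrak{f}(C)^{\sigma_\mathfrak{f}(C')}$.

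The main obstacle is the bookkeeping step identifying the matrix $\gamma$ produced by Shimura's reciprocity with the one implementing the change of basis from $\mathfrak{f}\mathfrak{c}^{-1}$ to $\mathfrak{f}(\mathfrak{c}\mathfrak{c}')^{-1}$; this requires matching each local component of $s$ at a prime of $\mathfrak{f}$ with the corresponding local description of multiplication by $(\mathfrak{c}')^{-1}$ on the lattice. The computation is essentially the one carried out in \cite[Chapter 11]{K-L} for Siegel-Ramachandra invariants, and because the only structural input used there is the Fricke family axiom, the same argument applies verbatim to any Fricke family, including $\{f_\mathbf{v}\}_{\mathbf{v} \in V_N}$.
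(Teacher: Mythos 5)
Your proposal is correct and follows essentially the same route as the paper, which simply defers to \cite[Chapter 11, Theorem 1.1]{K-L}: the proof there is exactly the Shimura-reciprocity computation you outline (membership in $K_\mathfrak{f}$ via invariance under Artin symbols of $\alpha\mathcal{O}_K$ with $\alpha\equiv1\Mod{\mathfrak{f}}$, and the transformation law via matching the reciprocity matrix with the change of basis from $\mathfrak{f}\mathfrak{c}^{-1}$ to $\mathfrak{f}(\mathfrak{c}\mathfrak{c}')^{-1}$). Your closing observation that the Kubert--Lang argument uses only the Fricke family axioms, and hence applies verbatim to $\{f_\mathbf{v}\}_{\mathbf{v}\in V_N}$, is precisely the point implicit in the paper's citation.
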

\begin{proof}
See \cite[Chapter 11, Theorem 1.1]{K-L}.
\end{proof}

\begin{remark}
\begin{itemize}
\item[(i)] Jung et al. (\cite{J-K-S}) recently showed that if $K$ is different from $\mathbb{Q}(\sqrt{-1})$ and $\mathbb{Q}(\sqrt{-3})$, then
    $f_\mathfrak{f}(C)$ generates $K_\mathfrak{f}$ over $H_K$.
\item[(ii)] For simplicity, we mean
by $g_\mathfrak{f}(C)$
the Fricke invariant obtained from
the family $\{g_\mathbf{v}(\tau)^{12N}\}_{\mathbf{v}\in V_N}$, and call it
the \textit{Siegel-Ramachandra invariant} modulo $\mathfrak{f}$ at $C$.
\item[(iii)]
Let $\mathfrak{f}=\prod_{\mathfrak{p}\,|\,\mathfrak{f}}\mathfrak{p}^{e_\mathfrak{p}}$
be the prime ideal factorization of $\mathfrak{f}$ and
$G_\mathfrak{p}=(\mathcal{O}_K/\mathfrak{p}^{e_\mathfrak{p}})^\times
/\{\alpha+\mathfrak{p}^{e_\mathfrak{p}}\in
(\mathcal{O}_K/\mathfrak{p}^{e_\mathfrak{p}})^\times~|~\alpha\in\mathcal{O}_K^\times\}$
for each $\mathfrak{p}\,|\,\mathfrak{f}$.
Schertz (\cite{Schertz}) first proved that
if
\begin{equation}\label{condition}
\textrm{the exponent of $G_\mathfrak{p}$ is greater than $2$ for every $\mathfrak{p}\,|\,\mathfrak{f}$},
\end{equation}
then
$g_\mathfrak{f}(C)$ generates $K_\mathfrak{f}$ over $K$.
And, Koo and Yoon further improved in \cite{K-Y} that
one can replace the exponent of $G_\mathfrak{p}$ in the assumption (\ref{condition})
by the order of $G_\mathfrak{p}$.
\end{itemize}
\end{remark}

\section {The second Kronecker's limit formula}

Let $K$ be an imaginary quadratic field and
$\mathfrak{f}$ be a proper nontrivial ideal of $\mathcal{O}_K$.
Let $\chi$ be a nonprincipal character of $\mathrm{Cl}(\mathfrak{f})$.
We define the \textit{Stickelberger
element} $S_\mathfrak{f}(\chi)$ by
\begin{equation}\label{Stickelberger}
S_\mathfrak{f}(\chi)=
\sum_{C\in\mathrm{Cl}(\mathfrak{f})}
\chi(C)\ln|g_\mathfrak{f}(C)|,
\end{equation}
and the \textit{$L$-function} $L_\mathfrak{f}(s,\chi)$ on $\mathbb{C}$ by
\begin{equation*}
L_\mathfrak{f}(s,\chi)=
\sum_{\mathfrak{a}}\frac{\chi([\mathfrak{a}])}{\mathrm{N}_{K/\mathbb{Q}}(\mathfrak{a})^s},
\end{equation*}
where $\mathfrak{a}$ runs over all nontrivial ideals of $\mathcal{O}_K$ prime to $\mathfrak{f}$
and $[\mathfrak{a}]$ means the class of $\mathrm{Cl}(\mathfrak{f})$ containing $\mathfrak{a}$.

\begin{proposition}\label{Kronecker}
Let $\mathfrak{f}_\chi$ be the conductor of $\chi$ and $\chi_0$ be the primitive character
of $\mathrm{Cl}(\mathfrak{f}_\chi)$ corresponding to $\chi$.
If $\mathfrak{f}_\chi\neq\mathcal{O}_K$, then we achieve
\begin{equation*}
L_{\mathfrak{f}_\chi}(1,\chi_0)
\prod_{\begin{smallmatrix}\mathfrak{p}~:~\textrm{prime ideals of $\mathcal{O}_K$}\\
~~~\textrm{such that}~\mathfrak{p}\,|\,\mathfrak{f},~\mathfrak{p}\nmid~\mathfrak{f}_\chi\end{smallmatrix}}
(1-\overline{\chi}_0([\mathfrak{p}]))=-
\frac{\pi\chi_0([\gamma\mathfrak{d}_K\mathfrak{f}_\chi])}{3N(\mathfrak{f}_\chi)\sqrt{|d_K|}\omega(\mathfrak{f}_\chi)
T_\gamma(\overline{\chi}_0)}S_{\mathfrak{f}}(\overline{\chi}),
\end{equation*}
where $\mathfrak{d}_K$ is
the different of the extension $K/\mathbb{Q}$, $\gamma$ is an element of $K$ such that
$\gamma\mathfrak{d}_K\mathfrak{f}_\chi$ is a nontrivial ideal of $\mathcal{O}_K$ prime to $\mathfrak{f}_\chi$,
$N(\mathfrak{f}_\chi)$ is the smallest positive integer in $\mathfrak{f}_\chi$,
$\omega(\mathfrak{f}_\chi)
=|\{\alpha\in\mathcal{O}_K^\times~|~\alpha\equiv1\Mod{\mathfrak{f}_\chi}\}|$ and
\begin{eqnarray*}
T_\gamma(\overline{\chi}_0)=
\sum_{x+\mathfrak{f}_\chi\in(\mathcal{O}_K/\mathfrak{f}_\chi)^\times}
\overline{\chi}_0([x\mathcal{O}_K])e^{2\pi
i\mathrm{Tr}_{K/\mathbb{Q}}(x\gamma)}.
\end{eqnarray*}
\end{proposition}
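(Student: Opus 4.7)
The plan is to derive Proposition \ref{Kronecker} by combining the classical second Kronecker limit formula for Eisenstein series with the expression of Hecke $L$-functions over $K$ as character-weighted sums of Kronecker--Eisenstein series over the ideal class group of the conductor; this is the route taken in Kubert--Lang \cite{K-L}. First I would reduce to the primitive situation $\mathfrak{f}=\mathfrak{f}_\chi$: the Siegel function satisfies the usual distribution relations, which, when summed against $\overline{\chi}$ by grouping the classes of $\mathrm{Cl}(\mathfrak{f})$ lying above a fixed class of $\mathrm{Cl}(\mathfrak{f}_\chi)$, convert $S_\mathfrak{f}(\overline{\chi})$ into $S_{\mathfrak{f}_\chi}(\overline{\chi}_0)$ multiplied by exactly $\prod_{\mathfrak{p}\,|\,\mathfrak{f},\,\mathfrak{p}\nmid\mathfrak{f}_\chi}(1-\overline{\chi}_0([\mathfrak{p}]))$. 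Hence it is enough to establish the formula when $\mathfrak{f}=\mathfrak{f}_\chi$.

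In this primitive case I would rewrite $L_{\mathfrak{f}_\chi}(s,\chi_0)$ class by class. For each $C\in\mathrm{Cl}(\mathfrak{f}_\chi)$ and a representative $\mathfrak{c}\in C$, writing $\mathfrak{f}_\chi\mathfrak{c}^{-1}=[\omega_1,\omega_2]$ with $\omega=\omega_1/\omega_2\in\mathbb{H}$ and $1=r_1\omega_1+r_2\omega_2$ as in \eqref{Frickeinvariant}, the partial $L$-sum over $\mathfrak{a}\in C$ becomes, up to a factor $\mathrm{N}_{K/\mathbb{Q}}(\mathfrak{f}_\chi\mathfrak{c}^{-1})^{-s}/\omega(\mathfrak{f}_\chi)$, the Kronecker--Eisenstein series
\begin{equation*}
\sum_{(m,n)\in\mathbb{Z}^2\setminus\{(0,0)\}}\frac{e^{2\pi i(mr_1+nr_2)}}{|m\omega+n|^{2s}}(\mathrm{Im}\,\omega)^s.
\end{equation*}
The additive exponential arises from Fourier-inverting $\chi_0$ on $(\mathcal{O}_K/\mathfrak{f}_\chi)^\times$ through the trace pairing $e^{2\pi i\mathrm{Tr}_{K/\mathbb{Q}}(x\gamma)}$; this is where the Gauss sum $T_\gamma(\overline{\chi}_0)$ and the twist $\chi_0([\gamma\mathfrak{d}_K\mathfrak{f}_\chi])$ enter, the shift by $\mathfrak{d}_K^{-1}$ reflecting the self-duality between $\mathcal{O}_K/\mathfrak{f}_\chi$ and $\mathfrak{d}_K^{-1}\mathfrak{f}_\chi^{-1}/\mathfrak{d}_K^{-1}$.

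Applying the classical second Kronecker limit formula, which evaluates the Eisenstein series above at $s=1$ to an explicit rational multiple of $\pi\log|g_{\left[\begin{smallmatrix}r_1\\r_2\end{smallmatrix}\right]}(\omega)|$, class by class, and recalling that the Siegel--Ramachandra invariant is the $12N(\mathfrak{f}_\chi)$-th power of $g_\mathbf{v}$, the weighted $C$-sum collapses to $S_{\mathfrak{f}_\chi}(\overline{\chi}_0)$ with the correct numerical prefactors: the $\sqrt{|d_K|}$ comes from the covolume of $\mathcal{O}_K$, the exponent $12$ absorbs into the denominator factor $3$, and $\omega(\mathfrak{f}_\chi)$ corrects for unit overcounting in the sum over $\mathfrak{a}\in C$. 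The main obstacle is the precise bookkeeping of the Gauss sum and the constants; in particular, one must verify that $\chi_0([\gamma\mathfrak{d}_K\mathfrak{f}_\chi])/T_\gamma(\overline{\chi}_0)$ is independent of the auxiliary $\gamma$, which reduces to the transformation $T_{\beta\gamma}(\overline{\chi}_0)=\overline{\chi}_0([\beta\mathcal{O}_K])T_\gamma(\overline{\chi}_0)$ obtained by the change of variable $x\mapsto\beta^{-1}x$ in the Gauss sum. Since the full identity, with all constants matched, is precisely the second Kronecker limit formula as given in Kubert--Lang \cite{K-L}, I would complete the argument by invoking that reference once the reductions above are in place.
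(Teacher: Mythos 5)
Your proposal is correct and follows essentially the same route as the paper, which simply cites the second Kronecker limit formula from Siegel and Kubert--Lang (Chapter 11, Theorem 2.1); your sketch of the reduction to the primitive conductor via distribution relations, the Kronecker--Eisenstein series rewriting, and the Gauss-sum bookkeeping is an accurate outline of the argument in that reference. Since you too ultimately invoke Kubert--Lang for the matched constants, there is no substantive difference in approach.
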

\begin{proof}
See \cite[Chapter II, Theorem 9]{Siegel} or \cite[Chapter 11, Theorem
2.1]{K-L}.
\end{proof}

\begin{remark}\label{Eulerfactor}
\begin{itemize}
\item[(i)] We have $|T_\gamma(\overline{\chi}_0)|=\sqrt{\mathrm{N}_{K/\mathbb{Q}}(\mathfrak{f}_\chi)}\neq0$
(\cite[p.107]{Siegel}).
\item[(ii)] Since $\chi_0$ is a nonprincipal character of $\mathrm{Cl}(\mathfrak{f}_\chi)$, we get $L_{\mathfrak{f}_\chi}(1,\chi_0)\neq0$
(\cite[Chapter V, Theorem 10.2]{Janusz}).
\item[(iii)] If every prime ideal factor of $\mathfrak{f}$ divides $\mathfrak{f}_\chi$,
then we understand the Euler factor $\prod_{\mathfrak{p}\,|\,\mathfrak{f},~\mathfrak{p}\nmid~\mathfrak{f}_\chi}
(1-\overline{\chi}_0([\mathfrak{p}]))$ to be $1$.
\end{itemize}
\end{remark}

\section {Generation of ring and ray class fields}\label{sectray}

Let $K$ be an imaginary quadratic field,
$\mathfrak{f}=N\mathcal{O}_K$ for an integer $N>1$
and $\mathcal{O}$ be the order of conductor $N$ in $K$.
Let $E$ be the elliptic curve stated in \textup{(\ref{parametrization})} and $h_E$
be the Weber function on $E$ defined as in \textup{(\ref{Weber})}.
By Proposition \ref{CM} (ii) and Remark \ref{WeberFricke} we know that
$h_E(\varphi_E(1/N))$ generates $K_\mathfrak{f}$ over $H_K$.
However, in this section we shall show under some condition that
one can pull the Hilbert class field $H_K$ down to the ground field $K$.
\par
For an intermediate field
$L$ of the extension $K_\mathfrak{f}/K$ we denote by $\mathrm{Cl}(K_\mathfrak{f}/L)$
the subgroup of $\mathrm{Cl}(\mathfrak{f})$ corresponding to
$\mathrm{Gal}(K_\mathfrak{f}/L)$
via the Artin reciprocity map $\sigma_\mathfrak{f}:\mathrm{Cl}(\mathfrak{f})
\stackrel{\sim}{\rightarrow}\mathrm{Gal}(K_\mathfrak{f}/K)$.

\begin{lemma}\label{character}
Assume that $K$ is different from $\mathbb{Q}(\sqrt{-1}),\mathbb{Q}(\sqrt{-3})$ and $N$ is prime to $6$. Then
there is a character $\chi$ of $\mathrm{Cl}(\mathfrak{f})$ satisfying
the following properties:
\begin{itemize}
\item[\textup{(C1)}] $\chi$ is trivial on $\mathrm{Cl}(K_\mathfrak{f}/K_\mathcal{O})$.
\item[\textup{(C2)}] $\chi(C')\neq1$ for any chosen
$C'\in\mathrm{Cl}(\mathfrak{f})\setminus\mathrm{Cl}(K_\mathfrak{f}/K_\mathcal{O})$
\textup{(}if any\textup{)}.
\item[\textup{(C3)}] Every prime ideal factor of $\mathfrak{f}$
divides the conductor $\mathfrak{f}_\chi$ of $\chi$.
\end{itemize}
\end{lemma}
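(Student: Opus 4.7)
My plan is to construct $\chi$ by descending to the ring class group. By (C1), $\chi$ must factor through the quotient $\mathrm{Cl}(\mathfrak{f})/\mathrm{Cl}(K_\mathfrak{f}/K_\mathcal{O})\simeq\mathrm{Cl}(\mathcal{O})$, so I look for a character $\bar\chi$ of $\mathrm{Cl}(\mathcal{O})$ and let $\chi$ be its inflation. The image $\bar C'$ of the chosen class $C'$ in $\mathrm{Cl}(\mathcal{O})$ is nontrivial by hypothesis, so (C2) amounts to $\bar\chi(\bar C')\neq 1$.

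Next I would analyze (C3) via the structure of $\mathrm{Cl}(\mathcal{O})$. Since $\mathcal{O}_K^\times=\{\pm 1\}\subset(\mathbb{Z}/N\mathbb{Z})^\times$ (using $K\neq\mathbb{Q}(\sqrt{-1}),\mathbb{Q}(\sqrt{-3})$ and $N\geq 5$), there is an exact sequence
\[
1\longrightarrow A\longrightarrow\mathrm{Cl}(\mathcal{O})\longrightarrow\mathrm{Cl}(\mathcal{O}_K)\longrightarrow 1,\qquad A=(\mathcal{O}_K/N\mathcal{O}_K)^\times/(\mathbb{Z}/N\mathbb{Z})^\times,
\]
and the Chinese remainder theorem gives a direct-product decomposition $A=\prod_{p\mid N}B_p$ with $B_p=\prod_{\mathfrak{p}\mid p}(\mathcal{O}_K/\mathfrak{p}^{e_\mathfrak{p}})^\times/(\mathbb{Z}/p^a\mathbb{Z})^\times$, where $p^a$ denotes the $p$-part of $N$. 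For any $\mathfrak{p}\mid\mathfrak{f}$ above a rational prime $p$, a case-by-case computation (split, inert, ramified) identifies the image in $\mathrm{Cl}(\mathcal{O})$ of $\ker\bigl(\mathrm{Cl}(\mathfrak{f})\to\mathrm{Cl}(\mathfrak{f}/\mathfrak{p}^{e_\mathfrak{p}})\bigr)$ with the full factor $B_p$, and $|B_p|\in\{(p-1)p^{a-1},(p+1)p^{a-1},p^a\}$ in the three cases, each strictly greater than $1$ because $p\geq 5$. Consequently (C3) reduces to the single requirement $\bar\chi|_{B_p}\neq 1$ for every rational prime $p\mid N$.

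To finish, I would pick a nontrivial character $\psi_p$ of each $B_p$ and set $\psi_A=\prod_{p\mid N}\psi_p\in\widehat A$; this character, inflated to $\mathrm{Cl}(\mathcal{O})$, already satisfies (C3). If $\bar C'\notin A$, its image in $\mathrm{Cl}(\mathcal{O}_K)$ is nontrivial, and twisting by an inflation of a suitable character of $\mathrm{Cl}(\mathcal{O}_K)$ (which is automatically trivial on $A$) forces $\bar\chi(\bar C')\neq 1$ without disturbing (C3). If instead $\bar C'=(c_p)_{p\mid N}\in A$, some component $c_{p_0}\neq 1$, and since $|B_{p_0}|\geq 4$ I have enough freedom to refine the choice of $\psi_{p_0}$ so that $\psi_{p_0}(c_{p_0})\neq 1$ while remaining nontrivial on $B_{p_0}$, and then to pick the other $\psi_p$'s so that $\psi_A(\bar C')\neq 1$. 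The main delicate point is the Step~2 verification that the local-at-$\mathfrak{p}$ kernels have nontrivial image in $\mathrm{Cl}(\mathcal{O})$ and that (C3) is governed precisely by the factors $B_p$, especially in the split case where quotienting by the diagonal copy of $(\mathbb{Z}/p^a\mathbb{Z})^\times$ threatens to collapse the $\mathfrak{p}$- and $\bar{\mathfrak{p}}$-contributions; this is exactly where the hypothesis $\gcd(N,6)=1$ is used, through the resulting bound $p\geq 5$.
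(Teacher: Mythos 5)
The paper gives no argument for this lemma --- its ``proof'' is a pointer to \cite[Lemma 3.4 and Remark 4.5]{K-S-Y} --- so your self-contained construction is a genuinely different presentation, though it is the natural one and, as far as I can tell, the same in spirit as the cited source. Your reduction checks out at every stage: a character satisfying (C1) is exactly one inflated from $\mathrm{Cl}(\mathcal{O})\simeq\mathrm{Cl}(\mathfrak{f})/\mathrm{Cl}(K_\mathfrak{f}/K_\mathcal{O})$; the sequence $1\to A\to\mathrm{Cl}(\mathcal{O})\to\mathrm{Cl}(\mathcal{O}_K)\to 1$ with $A=(\mathcal{O}_K/N\mathcal{O}_K)^\times/(\mathbb{Z}/N\mathbb{Z})^\times$ is exact precisely because $\mathcal{O}_K^\times=\{\pm1\}$ already lies in $(\mathbb{Z}/N\mathbb{Z})^\times$, which is where the hypothesis $K\neq\mathbb{Q}(\sqrt{-1}),\mathbb{Q}(\sqrt{-3})$ enters; and for each $\mathfrak{p}\mid p\mid N$ the image in $\mathrm{Cl}(\mathcal{O})$ of $\ker\bigl(\mathrm{Cl}(\mathfrak{f})\to\mathrm{Cl}(\mathfrak{f}\mathfrak{p}^{-e_\mathfrak{p}})\bigr)$ really is all of $B_p$. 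In the split case, which you rightly flag as the delicate one, this holds because the quotient of $U\times U$ ($U=(\mathcal{O}_K/\mathfrak{p}^a)^\times$) by the diagonal copy of $U$ is isomorphic to $U$ via $(x,y)\mapsto xy^{-1}$, so $U\times\{1\}$ surjects onto $B_p$; moreover an integer congruent to $1$ modulo $\bar{\mathfrak{p}}^{a}$ is congruent to $1$ modulo $p^a$, so the two primes above $p$ each force nontriviality on the same factor $B_p$ and (C3) is exactly ``$\bar\chi|_{B_p}\neq1$ for all $p\mid N$''. Your order count $|B_p|\in\{(p-1)p^{a-1},(p+1)p^{a-1},p^a\}\geq 4$ for $p\geq5$ is correct. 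The only compressed step is the final selection of characters, and it does go through: for $c_{p_0}\neq1$ of order $d$ the values $\psi(c_{p_0})$ over nontrivial characters $\psi$ of $B_{p_0}$ already cover at least two distinct roots of unity (all of the nontrivial $d$-th roots if $d\geq3$, and both $1$ and $-1$ if $d=2$ since then $\langle c_{p_0}\rangle\subsetneq B_{p_0}$), so one can always avoid the single forbidden value; likewise, when $\bar C'\notin A$ the extensions of $\psi_A$ form a torsor under the character group of $\mathrm{Cl}(\mathcal{O}_K)$ and at least two values of $\bar\chi(\bar C')$ occur. I see no gap.
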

\begin{proof}
See \cite[Lemma 3.4 and Remark 4.5]{K-S-Y}.
\end{proof}

\begin{theorem}\label{relativenorm}
With the same assumptions as in \textup{Lemma \ref{character}}, let
\begin{equation*}
\xi_N=h_E(\varphi_E(2/N))-h_E(\varphi_E(1/N))
\end{equation*}
which lies in $K_\mathfrak{f}$ by \textup{(\ref{Hassegenerators})}. Then
its relative norm $\mathrm{N}_{K_\mathfrak{f}/K_\mathcal{O}}(\xi_N)$ generates
$K_\mathcal{O}$ over $K$.
\end{theorem}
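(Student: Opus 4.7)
The plan is to prove this by contradiction: assume $F := K(\mathrm{N}_{K_\mathfrak{f}/K_\mathcal{O}}(\xi_N))$ is a proper subfield of $K_\mathcal{O}$ and derive a contradiction by computing a character sum in two ways. Under the assumption, the Galois correspondence lets me pick $C' \in \mathrm{Cl}(K_\mathfrak{f}/F)\setminus \mathrm{Cl}(K_\mathfrak{f}/K_\mathcal{O})$, and Lemma \ref{character} then produces a character $\chi$ of $\mathrm{Cl}(\mathfrak{f})$ satisfying (C1)--(C3). The first move is to translate $\xi_N$ into Fricke invariants: Remark \ref{WeberFricke} (with $k=1$) combined with (\ref{Frickeinvariant}) gives
\begin{equation*}
h_E(\varphi_E(1/N))=f_\mathfrak{f}(C_0),\qquad h_E(\varphi_E(2/N))=f_\mathfrak{f}(C_2),
\end{equation*}
where $C_0=[\mathcal{O}_K]$ and $C_2=[2\mathcal{O}_K]$ (the ideal $2\mathcal{O}_K$ is prime to $\mathfrak{f}$ because $\gcd(N,6)=1$). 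Proposition \ref{invariant} then yields $\xi_N^{\sigma_\mathfrak{f}(C)}=f_\mathfrak{f}(C_2C)-f_\mathfrak{f}(C)$ for every $C\in\mathrm{Cl}(\mathfrak{f})$.

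I will then introduce the weighted sum
\begin{equation*}
S(\chi):=\sum_{C\in\mathrm{Cl}(\mathfrak{f})}\chi(C)\ln\bigl|\xi_N^{\sigma_\mathfrak{f}(C)}\bigr|.
\end{equation*}
Since $\mathrm{N}_{K_\mathfrak{f}/K_\mathcal{O}}(\xi_N)$ is fixed by $\mathrm{Gal}(K_\mathcal{O}/F)$, grouping $C$ first by cosets of $\mathrm{Cl}(K_\mathfrak{f}/K_\mathcal{O})$ (on which $\chi$ is constant by (C1), assembling the $\xi_N$-factors into $\eta_N:=\mathrm{N}_{K_\mathfrak{f}/K_\mathcal{O}}(\xi_N)$) and then by cosets of $\mathrm{Gal}(K_\mathcal{O}/F)$ in $\mathrm{Gal}(K_\mathcal{O}/K)$ factors out $\sum_{h\in\mathrm{Gal}(K_\mathcal{O}/F)}\chi(h)$. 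By (C2) the restriction $\chi|_{\mathrm{Gal}(K_\mathcal{O}/F)}$ is nontrivial (taking value $\chi(C')\neq 1$ at the image of $C'$), so this inner sum vanishes and $S(\chi)=0$.

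For the competing evaluation, I apply Lemma \ref{FrickeSiegel} with $\mathbf{u}=[2r_1,2r_2]$ and $\mathbf{v}=[r_1,r_2]$, where $\omega,r_1,r_2$ are the data attached to $C$ in (\ref{Frickeinvariant}); the non-degeneracy $\mathbf{u}\not\equiv\pm\mathbf{v}\Mod{\mathbb{Z}^2}$ holds because $\gcd(N,6)=1$ keeps $3\mathbf{v}$ in $V_N$. Using the substitution $\mathfrak{c}\mapsto m\mathfrak{c}$ to identify $g_\mathfrak{f}(C_m C)=g_{[mr_1,mr_2]}(\omega)^{12N}$ for $m\in\{2,3\}$ (with $C_3:=[3\mathcal{O}_K]$), taking logarithms of absolute values gives
\begin{equation*}
6\ln\bigl|\xi_N^{\sigma_\mathfrak{f}(C)}\bigr|=\ln\Bigl|\tfrac{j(\omega)^2(j(\omega)-1728)^3}{2^{30}3^{24}}\Bigr|+\tfrac{1}{2N}\ln|g_\mathfrak{f}(C_3C)|-\tfrac{1}{N}\ln|g_\mathfrak{f}(C_2C)|-\tfrac{1}{2N}\ln|g_\mathfrak{f}(C)|.
\end{equation*}
Summing against $\chi$: the $j$-piece vanishes since it is constant on the fibers of $\mathrm{Cl}(\mathfrak{f})\twoheadrightarrow\mathrm{Cl}(\mathcal{O}_K)$ while $\chi$ is nontrivial on the kernel $P_K(\mathfrak{f})/P_{K,1}(\mathfrak{f})$ (forced by $\mathfrak{f}_\chi\neq\mathcal{O}_K$ from (C3)); the Siegel-Ramachandra pieces reindex to $\bigl[\overline{\chi(C_3)}-2\overline{\chi(C_2)}-1\bigr]S_\mathfrak{f}(\chi)/(2N)$. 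Because $C_2,C_3\in P_{K,\mathbb{Z}}(\mathfrak{f})/P_{K,1}(\mathfrak{f})=\mathrm{Cl}(K_\mathfrak{f}/K_\mathcal{O})$, property (C1) forces $\chi(C_2)=\chi(C_3)=1$, leaving $6S(\chi)=-S_\mathfrak{f}(\chi)/N$. Proposition \ref{Kronecker} combined with Remark \ref{Eulerfactor} and (C3) guarantees $S_\mathfrak{f}(\bar\chi)\neq 0$, hence $S_\mathfrak{f}(\chi)\neq 0$, so $S(\chi)\neq 0$, contradicting $S(\chi)=0$.

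The principal obstacle is the bookkeeping in this second evaluation: one must verify that the coordinate substitution $\mathfrak{c}\mapsto m\mathfrak{c}$ consistently identifies the Fricke and Siegel data for the three classes $C$, $C_2C$, $C_3C$ against a \emph{common} $\omega$, so that Lemma \ref{FrickeSiegel} applies literally and the reindexing $\sum_C\chi(C)\ln|g_\mathfrak{f}(C_mC)|=\overline{\chi(C_m)}S_\mathfrak{f}(\chi)$ is legitimate; a secondary subtlety is confirming that the hypothesis $K\neq\mathbb{Q}(\sqrt{-1}),\mathbb{Q}(\sqrt{-3})$ prevents $j(\omega)=0$ or $1728$ along any fiber so that the $\ln|j\cdot(j-1728)|$-term is always finite before it is discarded.
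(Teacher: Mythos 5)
Your proposal is correct and follows essentially the same argument as the paper: contradiction via a character from Lemma \ref{character}, the Fricke--Siegel relation of Lemma \ref{FrickeSiegel} applied to $\mathbf{u}=2\mathbf{v}$, nonvanishing of $S_\mathfrak{f}(\overline{\chi})$ from Proposition \ref{Kronecker} with (C3), and the coset decomposition over $\mathrm{Cl}(K_\mathfrak{f}/K_\mathcal{O})\subseteq\mathrm{Cl}(K_\mathfrak{f}/F)$ showing the weighted logarithmic sum vanishes. The only difference is local: you eliminate the $j$-contribution by noting it factors through $\mathrm{Cl}(\mathfrak{f})\twoheadrightarrow\mathrm{Cl}(\mathcal{O}_K)$ while (C3) forces $\chi$ to be nontrivial on $\mathrm{Cl}(K_\mathfrak{f}/H_K)$, whereas the paper uses that same nontriviality to choose a class $C''$ fixing $j(\tau_K)$ with $\chi(C'')\neq1$ and works with the quotient $\xi_N^{12N}/(\xi_N^{12N})^{\sigma_\mathfrak{f}(C'')}$, cancelling the $j$-factor before summing.
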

\begin{proof}
Let
\begin{equation*}
F=K(\mathrm{N}_{K_\mathfrak{f}/K_\mathcal{O}}(\xi_N))
\end{equation*}
which is an abelian extension of $K$ as a subfield of $K_\mathcal{O}$.
Suppose on the contrary that $F$ is properly contained in $K_\mathcal{O}$.
Then we can take a class
$C'$ in $\mathrm{Cl}(K_\mathfrak{f}/F)\setminus\mathrm{Cl}(K_\mathfrak{f}/K_\mathcal{O})$.
Let $\chi$ be a character of $\mathrm{Cl}(\mathfrak{f})$ satisfying the conditions (C1), (C2), (C3) in Lemma \ref{character}. By (C1) and (C2) we note that $\chi$ viewed as a character of $\mathrm{Cl}(K_\mathfrak{f}/F)/\mathrm{Cl}(K_\mathfrak{f}/K_\mathcal{O})$ is nontrivial.
Furthermore, it follows from Proposition \ref{Kronecker}, (C3) and Remark \ref{Eulerfactor} that
\begin{equation}\label{nonzero}
S_\mathfrak{f}(\overline{\chi})\neq0.
\end{equation}
\par
On the other hand, since $N$ is prime to $6$, we may regard
$C_2=[2\mathcal{O}_K]$ and $C_3=[3\mathcal{O}_K]$ as elements of $\mathrm{Cl}(K_\mathfrak{f}/K_\mathcal{O})$
by (\ref{Galois}). Let $C_0$ be the identity class of $\mathrm{Cl}(\mathfrak{f})$.
Then we deduce that
\begin{eqnarray}
\xi_N^{12N}&=&
(f_{\left[\begin{smallmatrix}0\\2/N\end{smallmatrix}\right]}(\tau_K)
-f_{\left[\begin{smallmatrix}0\\1/N\end{smallmatrix}\right]}(\tau_K))^{12N}
\quad\textrm{by Remark \ref{WeberFricke} (ii)}\nonumber\\
&=&\left(\frac{j(\tau_K)^2(j(\tau_K)-1728)^3}{2^{30}3^{24}}\right)^{2N}
\frac{g_{\left[\begin{smallmatrix}0\\3/N\end{smallmatrix}\right]}(\tau_K)^{12N}}
{g_{\left[\begin{smallmatrix}0\\2/N\end{smallmatrix}\right]}(\tau_K)^{24N}
g_{\left[\begin{smallmatrix}0\\1/N\end{smallmatrix}\right]}(\tau_K)^{12N}}\quad\textrm{by Lemma \ref{FrickeSiegel}}\nonumber\\
&=&\left(\frac{j(\tau_K)^2(j(\tau_K)-1728)^3}{2^{30}3^{24}}\right)^{2N}
\frac{g_\mathfrak{f}(C_3)}{g_\mathfrak{f}(C_2)^2g_\mathfrak{f}(C_0)}\quad\textrm{by definition (\ref{Frickeinvariant})}.
\label{difference}
\end{eqnarray}
Now that the assumption $K\neq\mathbb{Q}(\sqrt{-1}),\mathbb{Q}(\sqrt{-3})$ implies that
$j(\tau_K)\neq0,1728$ by Remark \ref{WeberFricke} (i), this value is nonzero.
\par
If $\chi$ is trivial on $\mathrm{Cl}(K_\mathfrak{f}/H_K)$, then
$\chi$ can be viewed as a character of $\mathrm{Cl}(\mathfrak{f})/\mathrm{Cl}(K_\mathfrak{f}/H_K)\simeq
\mathrm{Gal}(H_K/K)\simeq\mathrm{Cl}(\mathcal{O}_K)$.
This yields $\mathfrak{f}_\chi=\mathcal{O}_K$, which contradicts (C3). Thus there exists a class $C''\in\mathrm{Cl}(K_\mathfrak{f}/H_K)$
such that $\chi(C'')\neq1$.
Now, consider the element $x$ of $F$ given by
\begin{equation*}
x=\frac{\mathrm{N}_{K_\mathfrak{f}/K_\mathcal{O}}(\xi_N)^{12N}}
{(\mathrm{N}_{K_\mathfrak{f}/K_\mathcal{O}}(\xi_N)^{12N})^{\sigma_\mathfrak{f}(C'')}}
=\mathrm{N}_{K_\mathfrak{f}/K_\mathcal{O}}\left(\frac{\xi_N^{12N}}
{(\xi_N^{12N})^{\sigma_\mathfrak{f}(C'')}}\right).
\end{equation*}
Since $\sigma_\mathfrak{f}(C'')$ leaves $H_K=K(j(\tau_K))$ fixed,
we attain by (\ref{difference}) and Proposition \ref{invariant} that
\begin{eqnarray*}
x&=&\mathrm{N}_{K_\mathfrak{f}/K_\mathcal{O}}\left(\frac{g_\mathfrak{f}(C_3)}{g_\mathfrak{f}(C_2)^2g_\mathfrak{f}(C_0)}
\left(\frac{g_\mathfrak{f}(C_2)^2g_\mathfrak{f}(C_0)}{g_\mathfrak{f}(C_3)}\right)^{\sigma_\mathfrak{f}(C'')}\right)\\
&=&\mathrm{N}_{K_\mathfrak{f}/K_\mathcal{O}}\left(\frac{g_\mathfrak{f}(C_3)}{g_\mathfrak{f}(C_2)^2g_\mathfrak{f}(C_0)}
\frac{g_\mathfrak{f}(C_2C'')^2g_\mathfrak{f}(C'')}{g_\mathfrak{f}(C_3C'')}\right).
\end{eqnarray*}
Set
\begin{equation*}
y=\frac{g_\mathfrak{f}(C_3)}{g_\mathfrak{f}(C_2)^2g_\mathfrak{f}(C_0)}
\frac{g_\mathfrak{f}(C_2C'')^2g_\mathfrak{f}(C'')}{g_\mathfrak{f}(C_3C'')}
\end{equation*}
so that $x=\mathrm{N}_{K_\mathfrak{f}/K_\mathcal{O}}(y)$.
We then derive that
\begin{eqnarray}
&&\sum_{C\in\mathrm{Cl}(\mathfrak{f})}\overline{\chi}(C)\ln
\left|y^{\sigma_\mathfrak{f}(C)}\right|\nonumber\\
&=&\sum_{C\in\mathrm{Cl}(\mathfrak{f})}\overline{\chi}(C)\ln
\left|\frac{g_\mathfrak{f}(C_3C)}{g_\mathfrak{f}(C_2C)^2g_\mathfrak{f}(C)}
\frac{g_\mathfrak{f}(C_2C''C)^2g_\mathfrak{f}(C''C)}{g_\mathfrak{f}(C_3C''C)}\right|
\quad\textrm{by Proposition \ref{invariant}}\nonumber\\
&=&(\chi(C_3)-2\chi(C_2)-1)(1-\chi(C''))\sum_{C\in\mathrm{Cl}(\mathfrak{f})}\overline{\chi}(C)\ln|g_\mathfrak{f}(C)|\nonumber\\
&=&-2(1-\chi(C''))S_\mathfrak{f}(\overline{\chi})\nonumber\\
&&\textrm{by definition (\ref{Stickelberger}),
(C2) and the fact
$C_2,C_3\in\mathrm{Cl}(K_\mathfrak{f}/K_\mathcal{O})$}.\label{derive}
\end{eqnarray}
On the other hand, we know that $(1-\chi(C''))$ is nonzero.
And, we also get that
\begin{eqnarray*}
&&\sum_{C\in\mathrm{Cl}(\mathfrak{f})}\overline{\chi}(C)\ln|y^{\sigma_\mathfrak{f}(C)}|\\
&=&
\sum_{\begin{smallmatrix}E\in\mathrm{Cl}(\mathfrak{f})\\
E\Mod{\mathrm{Cl}(K_\mathfrak{f}/F)}
\end{smallmatrix}}
\sum_{\begin{smallmatrix}D\in\mathrm{Cl}(K_\mathfrak{f}/F)\\
D\Mod{\mathrm{Cl}(K_\mathfrak{f}/K_\mathcal{O})}
\end{smallmatrix}}
\sum_{C\in\mathrm{Cl}(K_\mathfrak{f}/K_\mathcal{O})}
\overline{\chi}(CDE)
\ln|y^{\sigma_\mathfrak{f}(CDE)}|\\
&=&\sum_{\begin{smallmatrix}E\in\mathrm{Cl}(\mathfrak{f})\\
E\Mod{\mathrm{Cl}(K_\mathfrak{f}/F)}
\end{smallmatrix}}\overline{\chi}(E)
\sum_{\begin{smallmatrix}D\in\mathrm{Cl}(K_\mathfrak{f}/F)\\
D\Mod{\mathrm{Cl}(K_\mathfrak{f}/K_\mathcal{O})}
\end{smallmatrix}}\overline{\chi}(D)
\sum_{C\in\mathrm{Cl}(K_\mathfrak{f}/K_\mathcal{O})}
\ln|y^{\sigma_\mathfrak{f}(C)\sigma_\mathfrak{f}(D)\sigma_\mathfrak{f}(E)}|
\quad\textrm{by (C1)}\\
&=&\sum_{\begin{smallmatrix}E\in\mathrm{Cl}(\mathfrak{f})\\
E\Mod{\mathrm{Cl}(K_\mathfrak{f}/F)}
\end{smallmatrix}}\overline{\chi}(E)
\sum_{\begin{smallmatrix}D\in\mathrm{Cl}(K_\mathfrak{f}/F)\\
D\Mod{\mathrm{Cl}(K_\mathfrak{f}/K_\mathcal{O})}
\end{smallmatrix}}\overline{\chi}(D)
\ln|x^{\sigma_\mathfrak{f}(D)\sigma_\mathfrak{f}(E)}|
\quad\textrm{since}~x=\mathrm{N}_{K_\mathfrak{f}/K_\mathcal{O}}(y)\\
&=&\sum_{\begin{smallmatrix}E\in\mathrm{Cl}(\mathfrak{f})\\
E\Mod{\mathrm{Cl}(K_\mathfrak{f}/F)}
\end{smallmatrix}}\overline{\chi}(E)
\ln|x^{\sigma_\mathfrak{f}(E)}|
\sum_{\begin{smallmatrix}D\in\mathrm{Cl}(K_\mathfrak{f}/F)\\
D\Mod{\mathrm{Cl}(K_\mathfrak{f}/K_\mathcal{O})}
\end{smallmatrix}}\overline{\chi}(D)
\quad\textrm{by the fact}~x\in F\\
&=&0\quad\textrm{because $\chi$
as a character of $\mathrm{Cl}(K_\mathfrak{f}/F)/\mathrm{Cl}(K_\mathfrak{f}/K_\mathcal{O})$
is nontrivial},
\end{eqnarray*}
from which we obtain $S_\mathfrak{f}(\overline{\chi})=0$ by (\ref{derive}).
But, this contradicts (\ref{nonzero}).
\par
Therefore, we conclude $F=K_\mathcal{O}$ as desired.
\end{proof}

\begin{remark}\label{explicit}
We see that
\begin{eqnarray}
\xi_N&=&h_E(\varphi_E(2/N))-h_E(\varphi_E(1/N))\nonumber\\
&=&f_{\left[\begin{smallmatrix}0\\2/N\end{smallmatrix}\right]}(\tau_K)-
f_{\left[\begin{smallmatrix}0\\1/N\end{smallmatrix}\right]}(\tau_K)\quad\textrm{by Remark \ref{WeberFricke} (ii)}\nonumber\\
&=&f_\mathfrak{f}([2\mathcal{O}_K])-f_\mathfrak{f}([\mathcal{O}_K])\quad\textrm{by definition (\ref{Frickeinvariant})}.\label{xiN}
\end{eqnarray}
On the other hand, we have the isomorphism
\begin{eqnarray*}
(\mathbb{Z}/N\mathbb{Z})^\times/\{\pm1\}&\rightarrow&\mathrm{Cl}(K_\mathfrak{f}/K_\mathcal{O})\\
t&\mapsto&[t\mathcal{O}_K]
\end{eqnarray*}
(\cite[Proposition 3.8]{E-K-S}).
Thus we can express $\mathrm{N}_{K_\mathfrak{f}/K_\mathcal{O}}(\xi_N)$ explicitly as
\begin{eqnarray*}
\mathrm{N}_{K_\mathfrak{f}/K_\mathcal{O}}(\xi_N)&=&
\prod_{t\in(\mathbb{Z}/N\mathbb{Z})^\times/\{\pm1\}}(f_\mathfrak{f}([2\mathcal{O}_K])-f_\mathfrak{f}([\mathcal{O}_K]))
^{\sigma_\mathfrak{f}([t\mathcal{O}_K])}
\quad\textrm{by (\ref{xiN})}\\
&=&\prod_{t\in(\mathbb{Z}/N\mathbb{Z})^\times/\{\pm1\}}
(f_\mathfrak{f}([2t\mathcal{O}_K])-f_\mathfrak{f}([t\mathcal{O}_K]))\quad\textrm{by Proposition \ref{invariant}}\\
&=&\prod_{t\in(\mathbb{Z}/N\mathbb{Z})^\times/\{\pm1\}}
(f_{\left[\begin{smallmatrix}0\\2t/N\end{smallmatrix}\right]}(\tau_K)-
f_{\left[\begin{smallmatrix}0\\t/N\end{smallmatrix}\right]}(\tau_K))\quad\textrm{by definition (\ref{Frickeinvariant})}\\
&=&\prod_{t\in(\mathbb{Z}/N\mathbb{Z})^\times/\{\pm1\}}
(h_E(\varphi_E(2t/N))-h_E(\varphi_E(t/N)))\quad\textrm{by Remark \ref{WeberFricke} (ii)}.
\end{eqnarray*}

\end{remark}

\begin{corollary}\label{main}
If $N$ is prime to $6$, then
$h_E(\varphi_E(1/N))$ generates $K_\mathfrak{f}$ over $K$.
\end{corollary}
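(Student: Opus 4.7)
The plan is to deduce Corollary \ref{main} from Theorem \ref{relativenorm} combined with Proposition \ref{CM} (ii), after handling separately the two exceptional imaginary quadratic fields. If $K = \mathbb{Q}(\sqrt{-1})$ or $K = \mathbb{Q}(\sqrt{-3})$, then the class number of $K$ is $1$, so $H_K = K$; since $1/N$ generates $\mathfrak{f}^{-1}/\mathcal{O}_K$ as an $\mathcal{O}_K$-module, Proposition \ref{CM} (ii) immediately gives $K(h_E(\varphi_E(1/N))) = K_\mathfrak{f}$ in these two cases.

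The content therefore lies in the case $K \neq \mathbb{Q}(\sqrt{-1}), \mathbb{Q}(\sqrt{-3})$. I would set $\alpha = h_E(\varphi_E(1/N))$ and $L = K(\alpha)$, and aim to show $L \supseteq K_\mathcal{O}$. The key observation is that $L$, being an intermediate field of the abelian extension $K_\mathfrak{f}/K$, is itself Galois over $K$ and hence stable under $\mathrm{Gal}(K_\mathfrak{f}/K)$. Under the identification $\mathrm{Gal}(K_\mathfrak{f}/K_\mathcal{O}) \simeq (\mathbb{Z}/N\mathbb{Z})^\times/\{\pm 1\}$ used in Remark \ref{explicit}, Proposition \ref{invariant} together with Remark \ref{WeberFricke} (ii) identifies $\sigma_\mathfrak{f}([t\mathcal{O}_K])(\alpha)$ with $h_E(\varphi_E(t/N))$; and since $\gcd(2,N) = 1$, the element $h_E(\varphi_E(2/N))$ is itself the conjugate $\sigma_\mathfrak{f}([2\mathcal{O}_K])(\alpha)$, whose further image under $\sigma_\mathfrak{f}([t\mathcal{O}_K])$ is $h_E(\varphi_E(2t/N))$. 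All these conjugates therefore belong to $L$, so every factor in the product expression for $\mathrm{N}_{K_\mathfrak{f}/K_\mathcal{O}}(\xi_N)$ given in Remark \ref{explicit} lies in $L$, and so does the norm itself.

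From this point the conclusion will follow directly. Theorem \ref{relativenorm} gives $K_\mathcal{O} = K(\mathrm{N}_{K_\mathfrak{f}/K_\mathcal{O}}(\xi_N)) \subseteq L$, so $H_K \subseteq K_\mathcal{O} \subseteq L$ by the inclusion (\ref{classinclusion}), and a final application of Proposition \ref{CM} (ii) yields $K_\mathfrak{f} = H_K(\alpha) \subseteq L$, forcing $L = K_\mathfrak{f}$. The only step requiring any real care is the Galois-stability argument that puts $\mathrm{N}_{K_\mathfrak{f}/K_\mathcal{O}}(\xi_N)$ into $K(\alpha)$ in the first place; once that is in hand, Theorem \ref{relativenorm} does all the remaining work, and I foresee no genuine obstacle.
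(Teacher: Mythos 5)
Your proposal is correct and follows essentially the same route as the paper: handle $\mathbb{Q}(\sqrt{-1})$ and $\mathbb{Q}(\sqrt{-3})$ via Proposition \ref{CM} (ii) with $H_K=K$, then use that $L=K(h_E(\varphi_E(1/N)))$ is Galois over $K$ to capture the conjugate $h_E(\varphi_E(2/N))=h_E(\varphi_E(1/N))^{\sigma_\mathfrak{f}([2\mathcal{O}_K])}$, hence $\xi_N$ and its norm, so that Theorem \ref{relativenorm} forces $K_\mathcal{O}\subseteq L$ and Proposition \ref{CM} (ii) finishes. No gaps.
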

\begin{proof}
If $K=\mathbb{Q}(\sqrt{-1})$ or $\mathbb{Q}(\sqrt{-3})$, then
the assertion holds by Proposition \ref{CM} (ii) and Remark \ref{WeberFricke} (i).
\par
Thus we let $K\neq\mathbb{Q}(\sqrt{-1}),\mathbb{Q}(\sqrt{-3})$, and
set $L=K(h_E(\varphi_E(1/N)))$. Since $L$
is an abelian extension of $K$ as a subfield of $K_\mathfrak{f}$ by
(\ref{Hassegenerators}), it
also contains $h_E(\varphi_E(1/N))^{\sigma_\mathfrak{f}([2\mathcal{O}_K])}$.
Here, we note that since $N$ is prime to $6$, $\mathrm{Cl}(\mathfrak{f})$ contains the class $[2\mathcal{O}_K]$.
We find that
\begin{eqnarray*}
h_E(\varphi_E(1/N))^{\sigma_\mathfrak{f}([2\mathcal{O}_K])}&=&
f_{\left[\begin{smallmatrix}0\\1/N\end{smallmatrix}\right]}(\tau_K)^{\sigma_\mathfrak{f}([2\mathcal{O}_K])}
\quad\textrm{by Remark \ref{WeberFricke} (ii)}\\
&=&f_\mathfrak{f}([\mathcal{O}_K])^{\sigma_\mathfrak{f}([2\mathcal{O}_K])}
\quad\textrm{by definition (\ref{Frickeinvariant})}\\
&=&f_\mathfrak{f}([2\mathcal{O}_K])\quad\textrm{by Proposition \ref{invariant}}\\
&=&f_{\left[\begin{smallmatrix}0\\2/N\end{smallmatrix}\right]}(\tau_K)\quad
\textrm{by definition (\ref{Frickeinvariant})}\\
&=&h_E(\varphi_E(2/N))
\quad\textrm{by Remark \ref{WeberFricke} (ii)}.
\end{eqnarray*}
Thus we deduce that
\begin{eqnarray*}
L&=&K(\xi_N,h_E(\varphi_E(1/N)))\quad\textrm{where}~
\xi_N=h_E(\varphi_E(2/N))-h_E(\varphi_E(1/N))\\
&=&K(\mathrm{N}_{K_\mathfrak{f}/K_\mathcal{O}}(\xi_N),h_E(\varphi_E(1/N)))\\
&=&K_\mathcal{O}(h_E(\varphi_E(1/N)))\quad\textrm{by Theorem \ref{relativenorm}}\\
&=&K_\mathfrak{f}\quad\textrm{by (\ref{classinclusion}) and Proposition \ref{CM} (ii)}.
\end{eqnarray*}
Therefore, we achieve that $L=K_\mathfrak{f}$.
\end{proof}

\bibliographystyle{amsplain}

\address{
Department of Mathematical Sciences \\
KAIST \\
Daejeon 305-701 \\
Republic of Korea} {jkkoo@math.kaist.ac.kr}
\address{
Department of Mathematics\\
Hankuk University of Foreign Studies\\
Yongin-si, Gyeonggi-do 449-791\\
Republic of Korea} {dhshin@hufs.ac.kr}
\address{
National Institute for Mathematical
Sciences\\
Daejeon 305-811\\
Republic of Korea} {dsyoon@nims.re.kr}

\end{document}